\newtheorem{theorem}{Theorem}[section]
\newtheorem{lemma}[theorem]{Lemma}
\theoremstyle{definition}
\numberwithin{equation}{section}
\journal{arXiv}
\begin{document}

\begin{frontmatter}
\title{Graphs having many holes but with small competition numbers}
\author[label1]{JungYeun Lee}
\author[label2]{Suh-Ryung Kim\fnref{label5}}
\author[label3]{Seog-Jin Kim}
\author[label4]{Yoshio Sano\corref{cor1}\fnref{label6}}
\address[label1]{National Institute for Mathematical Sciences,
Daejeon 305-390, Korea}
\address[label2]{Department of Mathematics Education,
Seoul National University,
Seoul 151-742, Korea}
\address[label3]{Department of Mathematics Education,
Konkuk University,
Seoul 143-701, Korea}
\address[label4]{
Pohang Mathematics Institute, POSTECH, Pohang 790-784, Korea}
\fntext[label5]{The author was supported 
by Basic Science Research Program through 
the National Research Foundation of Korea(NRF) funded by the Ministry
of Education, Science and Technology (700-20100058).}
\fntext[label6]{This work was supported
by Priority Research Centers Program
through the National Research Foundation of Korea (NRF) funded by
the Ministry of Education, Science and Technology (2009-0094069).}
 \cortext[cor1]{Corresponding author: ysano@postech.ac.kr}


\begin{abstract}
The {\em competition number} $k(G)$ of a graph $G$ is the smallest
number $k$ such that $G$ together with $k$ isolated vertices added
is the competition graph of an acyclic digraph. A chordless cycle
of length at least $4$ of a graph is called a hole of the graph.
The number of holes of a graph is closely related to its
competition number as the competition number of a graph
which does not contain a hole is at most one and the competition
number of a complete bipartite graph 
$K_{\lfloor \frac{n}{2} \rfloor, \lceil \frac{n}{2} \rceil}$ 
which has so many holes that no more holes can be
added is the largest among those of graphs with $n$ vertices. In
this paper, we show that even if a connected graph $G$ has many
holes, 
the competition number of $G$ 
can be as small as $2$ under some assumption. 
In addition, we show that, 
for a connected graph $G$ with exactly $h$ holes 
and at most one non-edge maximal clique, if all the holes of
$G$ are pairwise edge-disjoint and the
clique number $\omega=\omega(G)$ of
$G$ satisfies $2 \leq \omega \leq h+1$, then
the competition number of $G$ is at most $h-\omega+3$.
\end{abstract}

\begin{keyword}
competition graph; competition number; hole; clique

\MSC[2010] 05C75
\end{keyword}

\end{frontmatter}

\section{Introduction}

Let $D = (V,A)$ be a digraph (for all undefined
graph-theoretical terms, see \cite{bo}). The
\emph{competition graph} $C(D)$ of $D$ has the same vertex set as
$D$ and has an edge $xy$ if for some vertex $v \in V$, the arcs
$(x,v)$ and $(y,v)$ are in $D$. The notion of competition graph is
due to Cohen~\cite{cohen1} and has arisen from ecology.
A \emph{food web} in an ecosystem is a digraph whose vertices are
the species of the system and which has an arc from a vertex $u$
to a vertex $v$ if and only if $u$ preys on $v$. Given a food web
$F$, it is said that species $u$ and $v$ compete if and only if
they have a common prey. Competition graphs also have applications
in coding, radio transmission, and modeling of complex economic
systems.  (See \cite{RayRob} and \cite{Bolyai} for a summary of
these applications and \cite{glm} for a sample paper on the
modeling application.)

Roberts~\cite{Rob78} observed that every graph together with
sufficiently many isolated vertices is the competition graph of
an acyclic digraph.  The {\em competition number} $k(G)$ of a
graph $G$ is defined to be the smallest number $k$ such that $G$
together with $k$ isolated vertices added is the competition graph
of an acyclic digraph.  That is, when $I_k$ is a set of $k$
isolated vertices, $k(G)$ is the smallest integer $k$ such that
the disjoint union $G \cup I_k$ is the competition graph of an
acyclic digraph.  It is well known that computing the competition
number of a graph is an NP-hard problem \cite{Opsut}. It has been
one of the important research problems in the study of competition
graphs to characterize a graph by its competition number.

We call a cycle of a graph $G$ a \emph{chordless cycle} of $G$ if
it is an induced subgraph of $G$. A chordless cycle of length at
least 4 of a graph is called a \emph{hole} of the graph and a
graph without holes is called a {\em chordal graph}. The number of
holes of a graph is closely related to its competition number.
The competition number of a chordal graph
is at most one (see \cite{Rob78}).
The competition number
of a complete bipartite graph 
$K_{\lfloor \frac{n}{2} \rfloor,\lceil \frac{n}{2} \rceil}$ 
which has so many holes that no more holes can be added
is the largest among those of graphs with $n$ vertices
(see \cite{Harary}). 
Many authors have studied on the relationship between the number of holes
and the competition number of a graph
(see \cite{CK05}, \cite{Kim05}, \cite{KLS10}, \cite{twoholes}, \cite{LC}).
Roberts~\cite{Rob78} showed that if $G$ is nontrivial,
triangle-free and connected, then $k(G) = |E(G)| - |V(G)| + 2$.
In particular, if $G$ is a tree, then $k(G)=1$.
Take a graph $G$
such that $G$ has exactly $h$ holes
and no two holes of $G$ share an edge.
By the theorem by Roberts,
the competition number of $G$ is $h+1$
since $G$ has $h+|V(G)|-1$ edges.
Therefore $k(G)$ is almost as large as $h$.
Then we naturally come up with an interesting question:
``Is $k(G)$ still kept large if $G$ is allowed to have
just one maximal clique of size sufficiently large?".
In this paper, we answer this
question by showing that
even if a connected graph $G$ has many holes,
$k(G)$ can be as small as $2$ under some assumption.
In addition, we show that,
for a connected graph $G$ with exactly $h(G)$ holes
and at most one non-edge maximal clique,
if all the holes in $G$ are pairwise edge-disjoint and 
the clique number $\omega(G)$ of $G$
satisfies $2 \leq \omega(G) \leq h(G)+1$, 
then the competition number of $G$ is at most $h(G) -\omega(G)+3$.

\section{Main Results}

For a graph $G$ and a set $S \subseteq V(G)$ of vertices of $G$,
we denote by $G[S]$ the subgraph of $G$ induced by $S$.

\begin{lemma}\label{chord}
Let $C$ be a cycle of length at least $4$ in a graph $G$.
If $C$ has a chord, then the subgraph $G[V(C)]$ of $G$ has a triangle or
contains two holes which have a common edge.
\end{lemma}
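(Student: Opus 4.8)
The plan is to argue by strong induction on the length $n$ of the cycle $C$. In the base case $n = 4$, a chord must join a pair of opposite vertices, and either choice cuts the $4$-cycle into two triangles; hence $G[V(C)]$ contains a triangle and there is nothing more to prove.

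For the inductive step, suppose $n \geq 5$ and that the assertion holds for every cycle of length strictly less than $n$. Let $v_i v_j$ be a chord of $C$. Reading the two arcs of $C$ between $v_i$ and $v_j$ and closing each off with the chord, I obtain two cycles $C_1$ and $C_2$, both passing through the edge $v_i v_j$, whose lengths $a$ and $b$ satisfy $a + b = n + 2$ and $a, b \geq 3$. If $a = 3$ or $b = 3$, then one of $C_1, C_2$ is a triangle of $G[V(C)]$ and we are done, so I may assume $a, b \geq 4$; together with $a + b = n + 2$ this forces $a, b \leq n - 2 < n$. Note also that $C_1$ and $C_2$ are internally disjoint, so they meet in exactly the edge $v_i v_j$.

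First I would dispose of the case in which $C_1$ (or, symmetrically, $C_2$) has a chord. Then, since $C_1$ is a cycle of length at least $4$ and less than $n$, the induction hypothesis applies to $C_1$ in $G$ and yields that $G[V(C_1)]$ contains a triangle or two holes sharing an edge. I would then transfer this conclusion to $G[V(C)]$: because $V(C_1) \subseteq V(C)$, the graph $G[V(C_1)]$ is an induced subgraph of $G[V(C)]$, so no chord of a cycle lying inside $V(C_1)$ can appear or vanish when we pass to the larger induced subgraph. Consequently the triangle, or the two edge-sharing holes, produced inside $G[V(C_1)]$ remains a triangle, resp.\ two edge-sharing holes, inside $G[V(C)]$.

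The remaining case is that neither $C_1$ nor $C_2$ has a chord. Then each of $C_1, C_2$ is a chordless cycle of length at least $4$, that is, a hole of $G[V(C)]$, and the two holes share the common edge $v_i v_j$; this completes the argument. The one step that needs genuine care is precisely this transfer of a triangle or a pair of holes from $G[V(C_1)]$ up to $G[V(C)]$: I expect the main obstacle to be verifying cleanly that the property of being \emph{chordless} is preserved under enlarging the induced subgraph, which is exactly what guarantees that a hole found in the recursion is still a hole in the ambient graph $G[V(C)]$.
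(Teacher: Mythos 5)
Your proof is correct, but it follows a genuinely different route from the paper's. The paper gives a direct, non-inductive argument: writing the chord as $v_iv_j$, it takes shortest $(v_i,v_j)$-paths $P_1$ and $P_2$ in $G[\{v_i,v_{i+1},\ldots,v_j\}]-v_iv_j$ and $G[\{v_j,v_{j+1},\ldots,v_i\}]-v_iv_j$, respectively; each has length at least $2$, and minimality forces each cycle $P_k+v_iv_j$ to be chordless (any chord would shortcut $P_k$), so each such cycle is a triangle or a hole, and in the latter case the two holes share the edge $v_iv_j$. You instead induct on the length of $C$: you split $C$ along the chord into two cycles $C_1$ and $C_2$ through $v_iv_j$, finish immediately if one piece is a triangle or both are chordless, and otherwise recurse into a shorter piece that still has a chord, lifting its triangle or edge-sharing holes from $G[V(C_1)]$ up to $G[V(C)]$. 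The lifting step --- the point you rightly flag as delicate --- is sound for exactly the reason you give: every edge of $G$ between vertices of $V(C_1)$ already lies in $G[V(C_1)]$, so a cycle that is chordless in $G[V(C_1)]$ remains chordless in $G[V(C)]$ (and in $G$). Comparing the two: the paper's shortest-path device produces the chordless cycles in one shot and yields the slightly stronger conclusion that the triangle or the pair of holes can be taken through the original chord $v_iv_j$ itself, whereas your recursion may terminate at a triangle or at holes sharing some other edge found deeper in the cycle; conversely, your argument is more elementary in that it never needs the ``shortest path closed by an edge is an induced cycle'' observation that powers the paper's proof, in effect unwinding that observation into an induction. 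Either form of the conclusion suffices for the way the lemma is applied later (in Lemma~\ref{chord2}).
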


\begin{proof}
Let $C=v_1v_2v_3 \ldots v_n$ be a cycle of $G$ and $v_iv_j$ be a chord
of $C$ for some $i<j$. Two $(v_i,v_j)$-sections of $C$ are
$(v_i,v_j)$-walks of $G[\{v_i,v_{i+1}, \ldots,v_j\}]-v_iv_j$ and
$G[\{v_j,v_{j+1}, \ldots,v_i\}]-v_iv_j$.
Let $P_1$ and $P_2$ be
shortest $(v_i,v_j)$-paths in
$G[\{v_i,v_{i+1}, \ldots,v_j\}]-v_iv_j$
and $G[\{v_j,v_{j+1}, \ldots,v_i\}]-v_iv_j$, respectively.
Since $G$ is simple,
the lengths of $P_1$ and $P_2$ are at least $2$.
If the length of $P_1$ or $P_2$ is $2$,
then $P_1+v_iv_j$ or $P_2+v_iv_j$
is a triangle in $G[V(C)]$.
Otherwise, $P_1+v_iv_j$ and $P_2+v_iv_j$ are holes
which have a common edge $v_iv_j$.
\end{proof}

A {\it clique} is a complete subgraph of a graph.
A clique $K$ is called {\it non-edge} if
$|V(K)| \geq 3$.
The {\it clique number} of a graph $G$ is the maximum number
of vertices of a clique in $G$
and is denoted by $\omega(G)$.

\begin{lemma}\label{chord2}
Let $G$ be a connected graph. Suppose that all the holes in $G$
are pairwise edge-disjoint and that $G$ has exactly one non-edge
maximal clique $K$. Then, a cycle $C$ in $G$ is a hole if and
only if it satisfies $|V(K) \cap V(C)| \leq 2$.
\end{lemma}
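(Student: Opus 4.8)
The statement is a biconditional, so the plan is to prove the two implications separately after first recording a structural consequence of the hypotheses. Because $K$ is the \emph{only} non-edge maximal clique of $G$, every triangle of $G$ must have all three of its vertices in $V(K)$: a triangle is a clique on three vertices, hence is contained in some maximal clique, which then has at least three vertices and so must be $K$. I would state this observation first, since both directions lean on it.

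For the forward implication, suppose $C$ is a hole but $|V(K) \cap V(C)| \geq 3$, and pick three vertices $u,v,w \in V(K) \cap V(C)$. Since $u,v,w$ lie in the clique $K$, they are pairwise adjacent in $G$. As $C$ is chordless, each of the edges $uv$, $vw$, $uw$ must be an edge of $C$ (otherwise it is a chord); but three such mutually incident cycle-edges force $C$ to have length $3$, contradicting that a hole has length at least $4$. Hence $|V(K) \cap V(C)| \leq 2$, and this direction needs no appeal to Lemma~\ref{chord}.

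For the converse, assume $C$ is a cycle with $|V(K) \cap V(C)| \leq 2$ and show it is a hole. First, $C$ cannot be a triangle, since a triangle has all three vertices in $V(K)$ by the opening observation, which would give $|V(K) \cap V(C)| = 3$; thus $C$ has length at least $4$. It remains to show $C$ is chordless. If $C$ had a chord, then Lemma~\ref{chord} would force $G[V(C)]$ to contain either a triangle or two holes sharing an edge. In the first case the triangle's three vertices lie in $V(K) \cap V(C)$, contradicting $|V(K) \cap V(C)| \leq 2$. In the second case the two holes, being induced in $G[V(C)]$ and hence in $G$, are holes of $G$ sharing a common edge, contradicting the hypothesis that the holes of $G$ are pairwise edge-disjoint. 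Either way we reach a contradiction, so $C$ is chordless and therefore a hole.

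The main obstacle is the chordless part of the converse: one must correctly translate the objects produced by Lemma~\ref{chord}, which are guaranteed only inside the induced subgraph $G[V(C)]$, into objects of $G$ itself. The key point is that a hole of $G[V(C)]$ is automatically a hole of $G$ because $G[V(C)]$ is an induced subgraph of $G$, so the edge-disjointness hypothesis on the holes of $G$ can legitimately be invoked; and that the triangle subcase is closed off by the structural observation pinning all triangles inside $K$. Everything else is routine.
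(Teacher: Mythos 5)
Your proof is correct and takes essentially the same route as the paper: the ``if'' direction invokes Lemma~\ref{chord} and refutes both cases exactly as the paper does (a triangle would contradict the uniqueness of the non-edge maximal clique, and two holes sharing an edge would contradict edge-disjointness). You are in fact slightly more thorough than the paper, which dismisses the ``only if'' direction as obvious and silently skips the case where $C$ is itself a triangle---both of which you close off explicitly via your opening observation that every triangle of $G$ lies inside $K$.
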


\begin{proof}
The `only if' part is obvious.
We show the `if' part by contradiction.
Suppose that $C$ is not a hole, that is, $C$ has a chord.
By Lemma~\ref{chord}, the subgraph $G[V(C)]$ of $G$ has a
triangle or contains two holes with a common edge.
If $G[V(C)]$ has a triangle,
then the triangle is a non-edge clique
different from $K$ since $|V(K) \cap V(C)| \leq 2$,
which is a contradiction.
Otherwise, it contradicts the assumption that all
the holes of $G$ are edge-disjoint.
Thus $C$ is a hole.
\end{proof}

For a clique $K$ in a graph $G$, we call a path $P$ in $G$ a
{\em $K$-avoiding path} if $P$ is not an edge of $K$ and any of
internal vertices of $P$ is not on $K$.

\begin{lemma}\label{cond}
Let $G$ be a connected graph with exactly $h$ holes.
Suppose that all the holes in $G$ are pairwise edge-disjoint
and that $G$ has exactly one non-edge maximal clique.
If the non-edge maximal clique $K$ in $G$ has size $h+1$,
then there exists a vertex $v$ in $K $ satisfying
one of the following:
\begin{itemize}
\item[{\rm (a)}]
there is no $K$-avoiding path from the vertex $v$
to any vertex in any hole,
\item[{\rm (b)}]
the vertex $v$ is incident to an edge common to $K$ and a hole,
and is not contained in any other hole.
\end{itemize}
\end{lemma}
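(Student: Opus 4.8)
The plan is to prove the contrapositive by a counting argument backed by two structural facts coming from Lemma~\ref{chord2}. First I would reformulate conditions (a) and (b) into a classification of the vertices of $K$. Observe that if $v$ lies on some hole $C$, then since $|V(K)\cap V(C)|\le 2$ the vertex $v$ has a neighbor on $C$ that is off $K$, so the single edge to that neighbor is already a $K$-avoiding path reaching a hole; hence (a) can hold only for a vertex lying on no hole. Likewise (b) holds for $v$ exactly when $v$ lies on a unique hole $C$ with $|V(K)\cap V(C)|=2$. Calling a vertex \emph{bad} when it satisfies neither condition, I would assume for contradiction that every vertex of $K$ is bad and partition $V(K)$ into $A$ (vertices on $\ge 2$ holes), $B$ (vertices on exactly one hole $C$ with $|V(K)\cap V(C)|=1$), and $Z$ (vertices on no hole but having a $K$-avoiding path to some hole).

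Next I would record two facts. \emph{Fact 1}: if a vertex $z$ lies on no hole yet has a $K$-avoiding path to a vertex of a hole $C$ that meets $K$, then concatenating this path, an arc of $C$ reaching a vertex $a\in V(K)\cap V(C)$ off $K$, and the clique edge $az$ produces a closed walk meeting $K$ only in $\{z,a\}$; choosing a shortest path makes it a simple cycle, so by Lemma~\ref{chord2} it is a hole (or, if it has length $3$, a second non-edge maximal clique), contradicting that $z$ is on no hole. Thus every $z\in Z$ can reach only holes disjoint from $K$. \emph{Fact 2}: if a component $Q$ of $G-V(K)$ is adjacent to two vertices $u,w\in V(K)$, then a shortest path through $Q$ between them together with the edge $uw$ yields, again by Lemma~\ref{chord2}, a hole meeting $K$ exactly in $\{u,w\}$. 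Consequently the component containing the hole that a given $z\in Z$ reaches can be adjacent to no vertex of $K$ other than $z$, for otherwise Fact 2 would place $z$ on a hole. Hence distinct vertices of $Z$ reach distinct components, each containing a distinct hole disjoint from $K$, giving $|H_0|\ge |Z|$, where $H_0$ is the set of holes disjoint from $K$.

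Finally I would count. Writing $H_1,H_2$ for the holes meeting $K$ in one, resp.\ two, vertices, the incidence identity $2|H_2|+|H_1|=\sum_{v\in V(K)}(\text{number of holes through }v)\ge 2|A|+|B|$, together with $|A|+|B|+|Z|=h+1$ and $|H_0|+|H_1|+|H_2|=h$, and the injection of $B$ into $H_1$ (each $B$-vertex determines a distinct hole meeting $K$ only at it), rearranges to $|Z|\ge 1+|H_0|$. Combined with $|H_0|\ge |Z|$ from the previous paragraph this forces $|H_0|\ge 1+|H_0|$, a contradiction; therefore some vertex of $K$ satisfies (a) or (b).

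The main obstacle will be the two invocations of Lemma~\ref{chord2} in Facts 1 and 2: one must select the connecting paths as \emph{shortest} $K$-avoiding paths so that the resulting closed walk is a genuine simple cycle whose only vertices on $K$ are the two intended ones, and one must separately dispose of the degenerate triangle case by appealing to the hypothesis that $K$ is the unique non-edge maximal clique. Once these cycles are produced cleanly, the remaining incidence counting is routine.
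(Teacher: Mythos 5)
Your proof is correct, and it takes a genuinely different route from the paper's. The paper constructs an auxiliary bipartite multigraph $B$ between $V(K)$ and the set of holes, joining $v\in V(K)$ to a hole $H$ whenever there is a $K$-avoiding path from $v$ to $H$, with the edge doubled in the degenerate case where $v$ is a cut vertex separating $H$ from the rest of $K$; the heart of that argument is the bound $\deg_B(H)\leq 2$ for every hole (three $K$-avoiding paths from three distinct clique vertices into one hole would force either a second non-edge clique or two holes sharing an edge), after which the pigeonhole count $|E(B)|\leq 2h<2(h+1)$ produces a vertex of degree at most $1$, which is then shown to satisfy (a) or (b). You instead prove the contrapositive: you convert (a) and (b) into incidence statements (via Lemma~\ref{chord2}, any $K$-vertex on a hole fails (a), and (b) is equivalent to lying on a unique hole that meets $K$ in two vertices), partition the supposedly all-bad vertices of $K$ into $A$, $B$, $Z$, and replace the paper's single reachability count by two separate counts: a double count of genuine vertex-on-hole incidences within $K$ (plus the injection of $B$ into the holes meeting $K$ once), and an injection of $Z$ into the holes disjoint from $K$ via private components of $G-V(K)$. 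Your arithmetic does close: the relations $|A|+|B|+|Z|=h+1$, $|H_0|+|H_1|+|H_2|=h$, $|H_1|+2|H_2|\geq 2|A|+|B|$, and $|B|\leq|H_1|$ yield $|Z|\geq |H_0|+1$, which contradicts $|H_0|\geq |Z|$ from the component argument. Conceptually, the paper's multiplicity-$2$ device absorbs exactly the degeneracy that you handle through Fact 2 and the component analysis, so the two proofs address the same difficulty in different packaging. Your version makes the final counting elementary and transparent and isolates precisely where edge-disjointness and uniqueness of $K$ enter (only in Facts 1 and 2); the paper's version needs no classification of vertices and gets the pigeonhole in one line, at the cost of the somewhat ad hoc doubled edges and a separate verification that degree at most $1$ implies (a) or (b). Both proofs run on the identical engine -- extracting a chordless cycle through a clique edge from a walk and invoking Lemma~\ref{chord2} together with the uniqueness of $K$ -- and both leave the same routine details (shortest-path extraction from walks, degenerate triangles) to be checked, which you explicitly flag and correctly know how to dispose of.
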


\begin{proof}
Let
$H_1,H_2, \ldots,H_{h}$ be the holes of $G$.
We define a bipartite multigraph $B$ on bipartition $(V_1,V_2)$,
where $V_1 = V(K)=\{v_1,v_2, \ldots,v_{h+1}\}$ and
$V_2=\{H_1,H_2, \ldots,H_{h}\}$, as follows.
Two vertices $v_i \in V_1$ and $H_j \in V_2$
are joined by $r$ edges in $B$ if there
exists a $K$-avoiding path from $v_i$ to a vertex in $H_j$,
where $r$ is defined by
\[
r=
\left
\{\begin{array}{cl}
2 & \mbox{if $v_i$ is a cut vertex of $G$
and any vertex in $V(K) \setminus \{v_i\}$
and any vertex in $V(H_j) \setminus \{ v_i \}$ }\\
& \mbox{belong to different components of $G-v_i$,} \\
1 & \mbox{otherwise.}
\end{array}
\right.
\]
If $\deg_B(v_i)=0$ for some $i$,
then $v_i$ satisfies condition (a).
Suppose that $\deg_B(v_i)=1$ for some $i$.
Then there exists a unique $j$ such that
$G$ has a $K$-avoiding path $P$ from $v_i$
to a vertex $x$ in $H_j$.
Therefore $v_i$ is not contained in any
hole other than $H_j$.
If $G$ has no $K$-avoiding path from
$v_{i'} \in V(K) \setminus \{v_i\}$ to a vertex $x'$ in $H_j$,
then $v_i$ is a cut vertex and any vertex in
$V(K) \setminus \{v_i\}$ and any vertex in
$V(H_j) \setminus \{ v_i \}$ belong to
different components of $G-v_i$.
This implies that $\deg_B(v_i)=2$
and it is a contradiction.
Thus, $G$ has a $K$-avoiding path $P'$
from $v_{i'} \in V(K) \setminus \{v_i\}$ to a vertex $x'$ in
$H_j$. Then the walk formed by $v_i v_{i'}$, $P$, a
$(x,x')$-section of $H_j$, and $P'$ contains a cycle. Then the
edge $v_i v_{i'}$ is contained in a hole since $G$ has exactly one
non-edge maximal clique $K$.
Thus $v_i$ satisfies the condition (b).
Hence what we have to prove is the following:
\begin{itemize}
\item[($*$)]
there exists $v_i \in V_1$ such that $\deg_B(v_i) \leq 1$.
\end{itemize}
To show the claim ($*$), we show that $\deg_B(H_j) \leq 2$ hold
for all $1 \leq j \leq h$.
Suppose that $\deg_B(H_j) \geq 3$ for
some $j \in \{1, \ldots, h\}$.
We will reach a contradiction.

First, we suppose that there are three distinct $K$-avoiding paths
$P_1$, $P_2$, and $P_3$ going from the distinct vertices
$v_{i_1}$, $v_{i_2}$, and $v_{i_3}$ in $K$ to vertices $x_1$,
$x_2$, and $x_3$ in $H_j$, respectively.
Since $V(H_j) \cap V(K) \leq 2$ by Lemma~\ref{chord2},
without loss of generality, we may assume $v_{i_3} \notin V(H_j)$.
Then the length of $P_3$ is at least $1$.
Let $w$ be the vertex immediately following $v_{i_3}$ on $P_3$.
Then $w \not\in V(K)$.
If $v_{i_3}w$ is a cut edge of $G$,
then any path from a vertex in $K$ to a vertex in $H_j$ must
contain the edge $v_{i_3}w$.
This implies that $P_1$ contains the vertex $v_{i_3}$
as an internal vertex of $P_1$,
which contradicts that $P_1$ is a $K$-avoiding path.
Therefore $v_{i_3}w$ is not a cut edge,
and so the edge $v_{i_3}w$ is contained in some cycle in $G$.
Let $C$ be a shortest cycle among the cycles containing the
edge $v_{i_3}w$.
By the choice of $C$,  $C$ has no chord.
If $C$ is a triangle, i.e., a clique of size $3$,
then $C$ is a clique different from $K$
since $w \notin V(K)$ and $w \in V(C)$, which is a contradiction.
Thus $C$ is a hole.
Since $\{v_{i_1}, v_{i_2}, v_{i_3}\} \nsubseteq V(C)$
and $v_{i_3} \in V(C)$, $v_{i_1} \not\in V(C)$ or $v_{i_2} \not\in V(C)$.
Without loss of generality, we may assume that
$v_{i_1} \notin V(C)$.
The $(w,x_3)$-section of $P_3$, an $(x_3,x_1)$-section of $H_j$ and
the $(x_1,v_{i_1})$-section of $P_1$ form a $(w,v_{i_1})$-walk $W$
which does not contain $v_{i_3}$.
Let $Q$ be the shortest $(w,v_{i_1})$-path that is
a subsequence of the $(w,v_{i_1})$-walk $W$.
Then $C'=Qv_{i_3}w$ is a cycle.
Here we note that $ V(K) \cap V(C') = \{v_{i_1},v_{i_3}\}$
by the definition.
By Lemma~\ref{chord2}, $C'$ is a hole and we have reached a contradiction
as $v_{i_3}w$ is an edge common to the holes $C$ and $C'$.

Now suppose that $H_j \in V_2$ is incident to multiple edges. Let
$v_{i_1} \in V_1$ be the other end of the multiple edges. Since
$\deg_B(H_j) \geq 3$, there is another vertex $v_{i_2}$ adjacent
to $H_j$ in $B$. By the definition of $B$, $v_{i_1}$ is a cut
vertex of $G$ and no other vertex in $K$ belongs to the component
containing vertices of $H_j$ in $G-v_{i_1}$. It contradicts to the
existence of a $K$-avoiding path from $v_{i_2}$ to a vertex in
$H_j$ which does not contain $v_{i_1}$.

Consequently, $\deg_B(H_j) \leq 2$ for all $1 \leq j \leq h$
and so
\[
\sum^{h+1}_{i=1} \deg_B(v_i) = |E(B)|
= \sum^{h}_{j=1} \deg_B(H_j) \leq 2 h.
\]
If $\deg_B(v_i)\geq 2$ for all
$1 \leq i \leq h+1$, then $\sum^{h+1}_{i=1} \deg_B(v_i) \geq 2 (h+1)$
and it is a contradiction. Therefore, there exists a vertex $v_i$ with
$\deg_B(v_i) \leq 1$ and so ($\ast$) holds.
\end{proof}

\begin{lemma}\label{hole}
Let $G$ be a connected graph with exactly $h$ holes.
Suppose that all the holes in $G$ are pairwise edge-disjoint
and that $G$ has exactly one non-edge maximal clique $K$.
If $G-e$ has at least $h$ holes for some edge $e$ of a hole $H$ in $G$,
then $e$ is an edge of $K$.
In particular, holes in $G-e$ but not in $G$
have the form $(H - v_iv_j) \cup \{ v_iv_k, v_jv_k \}$
where $e=v_iv_j$ and $v_k$ is a vertex of $K$.
\end{lemma}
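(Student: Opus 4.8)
The plan is to exploit the fact that deleting a single edge cannot destroy more than one of the pairwise edge-disjoint holes, so the hypothesis that $G-e$ still has at least $h$ holes forces a \emph{new} hole to appear. First I would note that, since the holes of $G$ are pairwise edge-disjoint, the edge $e$ lies in the unique hole $H$; deleting $e$ turns $H$ into a path while leaving every other hole of $G$ intact, because deleting an edge can never create a chord. Hence $G-e$ contains exactly the $h-1$ old holes different from $H$, and the hypothesis that $G-e$ has at least $h$ holes guarantees the existence of at least one hole $C$ of $G-e$ that is not a hole of $G$.

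Next I would pin down the structure of such a new hole $C$. Since $C$ is a cycle avoiding $e$, it is also a cycle of $G$; and as $C$ is chordless in $G-e$ but not in $G$, the only edge of $G$ that can serve as a chord of $C$ is $e$ itself. Thus $e=v_iv_j$ is the unique chord of $C$ in $G$, with $v_i,v_j\in V(C)$ non-adjacent along $C$, and this chord splits $C$ into two arcs $P_1$ and $P_2$, each of which is an induced path of $G$ (a chord lying inside an arc would be a chord of $C$ other than $e$).

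Now I would invoke Lemma~\ref{chord} for the cycle $C$ with chord $e$. Adding $e$ back to either arc yields a chordless cycle, which is a hole of $G$ as soon as its length is at least $4$. If both arcs had length at least $3$, then these two cycles would be holes of $G$ sharing the edge $e$, contradicting edge-disjointness; so at least one arc, say $P_1$, has length $2$, say $P_1=v_iv_kv_j$. Then $\{v_i,v_j,v_k\}$ spans a triangle, which is a non-edge clique, and since $K$ is the only non-edge maximal clique of $G$ this triangle must lie in $K$. In particular $v_i,v_j\in V(K)$, which shows that $e$ is an edge of $K$.

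Finally, for the displayed form I would examine the other arc $P_2$. It cannot also have length $2$: otherwise $C$ would be a $4$-cycle $v_iv_kv_jv_{k'}$ with $v_k,v_{k'}\in V(K)$, so the edge $v_kv_{k'}$ of $K$ would survive in $G-e$ as a chord of $C$, contradicting that $C$ is a hole of $G-e$. Hence $P_2$ has length at least $3$, so adding $e$ back to $P_2$ gives a genuine hole of $G$ containing the edge $e$; by edge-disjointness $e$ lies in only one hole, whence this hole equals $H$ and $P_2=H-v_iv_j$. Therefore $C=(H-v_iv_j)\cup\{v_iv_k,v_jv_k\}$ with $v_k\in V(K)$, as claimed. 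I expect the main obstacle to be the bookkeeping of the second and third steps—certifying that $e$ is the \emph{sole} chord of $C$ and that each arc together with $e$ is chordless in $G$—since this is exactly what turns Lemma~\ref{chord} into the triangle alternative via edge-disjointness and lets the final cycle be identified with $H$.
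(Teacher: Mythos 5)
Your proof is correct, and its engine is the same as the paper's: a hole of $G-e$ that is not a hole of $G$ must have $e$ as its unique chord, and then the dichotomy ``a chordless $(v_i,v_j)$-path of length $2$ closes with $e$ into a triangle, which must lie in $K$; one of length at least $3$ closes with $e$ into a second hole through $e$, contradicting edge-disjointness'' does the work. The difference is in how that dichotomy is reached. The paper takes a single shortest $(u,v)$-path in $G-e$ (shortestness supplying chordlessness), concludes $e\in E(K)$, and then merely asserts the ``in particular'' clause; you instead decompose the new hole $C$ into its two arcs, which are chordless apart from $e$ precisely because $C$ is a hole of $G-e$, and you analyze both arcs. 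That buys you two things. First, a complete proof of the second claim, which the paper never argues in this direction: you show the two arcs cannot both have length $2$ (otherwise the clique edge $v_kv_{k'}$ would survive in $G-e$ as a chord of $C$), so the long arc exists, closes with $e$ into a hole containing $e$, and by edge-disjointness that hole must be $H$; hence every new hole equals $(H-v_iv_j)\cup\{v_iv_k,v_jv_k\}$ with $v_k\in V(K)$. Second, you sidestep a small imprecision in the paper: its shortest path $P$ could in principle be $H-e$ itself, in which case the assertion ``$P+e$ is a hole distinct from $H$'' would fail, whereas your two arcs cannot both coincide with $H-e$. Two cosmetic points: your citation of Lemma~\ref{chord} is decorative, since you directly reprove the sharper statement you need; and calling the arcs ``induced paths of $G$'' is not literally correct, because their endpoints are adjacent in $G$ via $e$ --- what you actually use, and what is true, is that each arc together with $e$ is a chordless cycle of $G$.
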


\begin{proof}
Suppose that $G-e$ has at least $h$ holes for an edge $e=uv$ of a
hole $H$. Since all the holes in $G$ are edge-disjoint, any hole
other than $H$  does not contain the edge $e$. Since $G-e$ has at
least $h$ hole, $e$ is a chord of a cycle distinct from $H$ in
$G$. That is, there exists a $(u,v)$-path $P$ other than $H-e$.
Without loss of generality, we may assume that $P$ is a shortest
path between $u$ and $v$ in $G-e$. Since $G$ is simple, $P$ is not
an edge. If the length of $P$ is at least $3$, then $P+e$ is a
hole which is distinct from $H$. It is also a contradiction as $e$
is an edge common to $H$ and $P+e$. Thus, the length of $P$ is
$2$. This implies that $P+e$ is a triangle and so it is contained
in $K$. Therefore, $e$ is an edge common to $H$ and $K$. In
addition, we can easily check that $H-e$ together edges $v_iv_k$
and $v_jv_k$ is a hole of $G-e$ where $e=v_iv_j$ and $v_k$ is a
vertex of $K$.
\end{proof}

\begin{lemma}\label{pasting}
Let $D_1$ and $D_2$ be acyclic digraphs such that
$V(D_1) \cap V(D_2) = \emptyset$.
Suppose that there are $p$ isolated vertices in $C(D_1)$ and
there are $p$ vertices which have no in-neighbors in $D_2$.
Then there exists an acyclic digraph $D$ such that
$C(D) = C(D_1) \cup C(D_2) - I_p$,
where $I_p$ is a set of $p$ isolated vertices in $C(D_1)$.
\end{lemma}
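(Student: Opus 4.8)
The plan is to obtain $D$ by gluing $D_2$ onto $D_1$ along the two distinguished sets of $p$ vertices. Write $I_p=\{a_1,\dots,a_p\}$ for the isolated vertices of $C(D_1)$, and let $b_1,\dots,b_p$ be the $p$ vertices of $D_2$ having no in-neighbor in $D_2$. I would take $D$ to be the digraph obtained from the disjoint union of $D_1$ and $D_2$ by identifying $a_i$ with $b_i$ for each $i\in\{1,\dots,p\}$ (naming the resulting vertex $b_i$) and keeping all remaining vertices and all arcs of $D_1$ and $D_2$. Then $V(D)=(V(D_1)\cup V(D_2))\setminus I_p$, which is exactly the vertex set of $C(D_1)\cup C(D_2)-I_p$, so the two vertex sets already agree and only the arc structure and the resulting edges remain to be checked.

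First I would check that $D$ is acyclic, which is where the hypothesis on $D_2$ enters. Because each $b_i$ has no in-neighbor in $D_2$, every arc of $D$ entering an identified vertex comes from $D_1$, and no arc of $D_2$ enters any $b_i$; consequently every out-arc of a vertex of $V(D_2)\setminus\{b_1,\dots,b_p\}$ leads to another vertex of that same set. Hence a directed walk that reaches a vertex of $V(D_2)\setminus\{b_1,\dots,b_p\}$ can never leave this set, so a directed cycle meeting it would be a cycle of $D_2$. Any other directed cycle uses only vertices of $V(D_1)$ together with the identified vertices and, since no $D_2$-arc joins two identified vertices, only arcs of $D_1$, hence would be a cycle of $D_1$. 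As $D_1$ and $D_2$ are both acyclic, $D$ has no directed cycle.

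Next I would verify $C(D)=C(D_1)\cup C(D_2)-I_p$. Since every out-neighbor of a $V(D_1)$-vertex lies in $V(D_1)$ and every out-neighbor of a $V(D_2)$-vertex lies in $V(D_2)$, each edge of $C(D)$ arises from a common out-neighbor lying entirely in $V(D_1)$ or entirely in $V(D_2)$, so it comes either from the out-neighbor structure of $D_1$ or from that of $D_2$. Reading off these two cases, the edges contributed by $D_1$ are exactly those of $C(D_1)$ among the surviving vertices (a common out-neighbor that is an identified vertex $b_i$ plays the role of $a_i$ in $D_1$), and those contributed by $D_2$ are exactly the edges of $C(D_2)$. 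The step I expect to be the main obstacle is confirming that the identification creates no spurious edge at the vertices $b_i$ and destroys no genuine edge: here the hypothesis that each $a_i$ is isolated in $C(D_1)$ is precisely what is needed, since it guarantees that $a_i$ shares no out-neighbor with any vertex of $D_1$, so after identification $b_i$ inherits no competition edge from the $D_1$-structure and keeps exactly its $C(D_2)$-edges, while deleting the edgeless vertex $a_i$ removes no edge among the remaining vertices. Combining the acyclicity check with this edge analysis yields $C(D)=C(D_1)\cup C(D_2)-I_p$, as required.
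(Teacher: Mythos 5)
Your proposal is correct and takes essentially the same approach as the paper: the paper builds $D$ on $V(D_1)\cup V(D_2)-I_p$ by redirecting every arc that entered $i_j$ so that it enters $u_j$ instead, which is exactly your identification of $a_i$ with $b_i$, and both arguments use the two hypotheses in the same way (no in-arcs at $b_i$ in $D_2$ keeps $D$ acyclic and keeps the in-neighborhoods of the merged vertices purely from $D_1$; isolation of $a_i$ in $C(D_1)$ prevents spurious competition edges at the merged vertices). Your check that the merged vertices create no new edges and lose none is the same verification the paper leaves implicit, so the two proofs coincide in substance.
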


\begin{proof}
Let $I_p=\{ i_1$, $i_2$, \ldots, $i_p$\} be a set of $p$ isolated
vertices in $C(D_1)$ and $u_1$, $u_2$, \ldots, $u_p$ be vertices
which have no in-neighbors in $D_2$. We define a digraph $D$ with
vertex set $V(D_1) \cup V(D_2)- I_p$ by changing the arcs incoming
toward $i_j$ to the arcs incoming toward $u_j$, that is,
\[
A(D) = A(D_1) \cup A(D_2) -
\bigcup_{j=1}^p \{(v,i_j) \mid v \in N_{D_1}^{-}(i_j)\}
\cup \bigcup_{j=1}^p \{(v,u_j) \mid v \in N_{D_1}^{-}(i_j)\}
\]
(see Figure~\ref{paste} for an illustration).
Then $D$ is acyclic and $C(D) = C(D_1) \cup C(D_2) - I_p$.
Hence the lemma holds.
\end{proof}

\begin{figure}
\psfrag{D}{\footnotesize $D_1$}
\psfrag{H}{\footnotesize $D_2$}
\psfrag{G}{\footnotesize $C(D_1)$}
\psfrag{I}{\footnotesize $C(D_2)$}
\psfrag{J}{\footnotesize $D$}
\psfrag{K}{\footnotesize $C(D)=C(D_1) \cup C(D_2) - \{i\}$}
\psfrag{i}{\footnotesize $i$}
\psfrag{X}{\footnotesize $u$}
\begin{center}
\includegraphics[width=400pt]{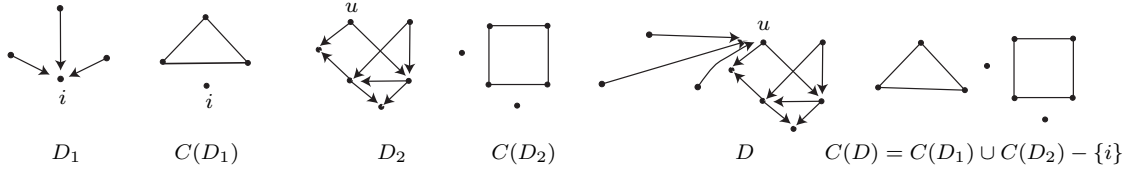}
\end{center}
\caption{\label{paste}
$D_1$, $D_2$, and $D$.}
\end{figure}

Now we show our main results. 
The following theorem claims that even 
if a connected graph $G$ has many holes, 
its competition number 
$k(G)$ can be as small as $2$.

\begin{theorem}\label{main1}
Let $G$ be a connected graph with exactly $h$ holes. 
Suppose that all the holes in $G$ are pairwise edge-disjoint 
and that $G$ has at most one non-edge maximal clique 
and that the clique number $\omega(G)$ of $G$ is equal to $h+1$. 
Let $K$ be a clique with $|V(K)|=\omega(G)$. 
Then, there exists an acyclic digraph $D$ 
such that $C(D)=G \cup \{i_1,i_2\}$ 
and all the vertices of $K$ have $i_2$ as a common out-neighbor, 
where $i_1$ and $i_2$ are new isolated vertices. 
In particular, $k(G) \leq 2$.
\end{theorem}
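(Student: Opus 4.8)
My plan is to first recast the theorem as a purely combinatorial statement about vertex orderings, and then establish that statement by induction on $h$ using Lemmas~\ref{cond}, \ref{hole} and~\ref{pasting}. Let $F$ denote the set of edges of $G$ that do not lie in $K$. A short computation with the block decomposition shows that the cycle rank of $G$ equals $\binom{h}{2}+h$: the clique $K=K_{h+1}$ contributes $\binom{h}{2}$, and each of the $h$ pairwise edge-disjoint holes contributes $1$ (a hole meets $K$ in at most two vertices by Lemma~\ref{chord2}, hence shares at most one edge with $K$). Since $\binom{h}{2}+h=\binom{h+1}{2}=|E(K)|$ exactly when $\omega(G)=h+1$, we get $|F|=|E(G)|-|E(K)|=|V(G)|-1$; this identity is the arithmetic heart of the whole statement and pinpoints why the hypothesis $\omega=h+1$ is the right balance point.

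Next I would reduce the construction of $D$ to the following data: a linear order $\prec$ of $V(G)$ together with an injective map $\phi\colon F\to V(G)\cup\{i_1\}$ such that $\phi(e)$ is a strict $\prec$-successor of both endpoints of each $e\in F$ (with $i_1$ treated as larger than every vertex of $G$). Given this, I orient each $e=\{a,b\}\in F$ toward its host by the arcs $(a,\phi(e))$ and $(b,\phi(e))$, send every vertex of $K$ to $i_2$, and place $i_1,i_2$ last. All $F$-arcs then go forward in $\prec$ and $i_1,i_2$ are sinks, so $D$ is acyclic; moreover the in-neighbourhood of each vertex is either a single edge of $F$, the clique $K$ (at $i_2$), or empty, so the family of in-neighbourhoods is an edge clique cover realizing exactly $E(G)=F\sqcup E(K)$, giving $C(D)=G\cup\{i_1,i_2\}$ with all of $K$ pointing to $i_2$. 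Thus it suffices to produce an order in which at most one edge of $F$ is forced onto $i_1$. Because $|F|=|V(G)|-1$ while we have $|V(G)|$ internal hosts, the counting is favorable, and what remains is a Hall-type feasibility condition governed by how the hole-edges pile up near the top of $\prec$.

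To secure such an order I would induct on $h$. For the inductive step I apply Lemma~\ref{cond} to locate a vertex $v\in K$ satisfying (a) or (b). In case (b), $v$ lies on a unique hole $H$ and shares with $K$ an edge $vv'$; I delete $v$ and pass to $G-v$. Here the absence of triangles and of shared-edge holes outside $K$ (the mechanism behind Lemmas~\ref{chord} and~\ref{hole}) should show that $G-v$ has exactly $h-1$ holes, clique number $h$, a single non-edge maximal clique $K-v$, and pairwise edge-disjoint holes, so the induction hypothesis yields an acyclic $D'$ with $C(D')=(G-v)\cup\{i_1',i_2'\}$ and $(K-v)\to i_2'$. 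I then reattach $v$ and the path $H-v$ (which stays joined to $K-v$ through $v'$) by Lemma~\ref{pasting}, recycling the freed isolated vertices so that the total budget stays at $2$ and $v$ joins the vertices pointing to $i_2$. Case (a), where $v$ has no $K$-avoiding path to a hole, drops $\omega$ but not $h$ and so does not fit the induction directly; there I expect to place $v$ (which is not the later endpoint of any hole-edge) at the top of $\prec$, absorbing the single permitted overflow.

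The hard part will be the bookkeeping at the seam. Concretely I must verify that deleting $v$ keeps $G-v$ connected, or else peel off the resulting chordal/tree-like pieces and reattach them at no extra cost via Lemma~\ref{pasting}; that no spurious holes are created by the deletion (so the hole count drops by exactly one, which is where Lemma~\ref{hole} is pivotal); and that the identification of isolated vertices performed by Lemma~\ref{pasting} reuses precisely $i_1',i_2'$ without raising the count above two and without disturbing the property that all of $K$ feeds $i_2$. I anticipate case (a) to be the most delicate, since there the reduction must be reorganized so that the single $i_1$-overflow is genuinely forced at most once across the whole ordering.
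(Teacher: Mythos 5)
Your framework is sound as far as it goes: the identity $|E(G)\setminus E(K)|=|V(G)|-1$ is correct (the $h$ holes together with the triangles of $K$ generate the cycle space, so the cycle rank is $\binom{h}{2}+h$), and a linear order of $V(G)$ with an injective, order-respecting host map $\phi$ would indeed produce the required digraph. Your inductive plan via Lemma~\ref{cond} also matches the paper's strategy in outline. The gap is that neither inductive case is actually carried out, and the reduction you propose for case (b) fails. You delete the \emph{vertex} $v$, but condition (b) does not prevent other holes from being attached to $G$ only through $v$. For example, let $K$ be the triangle $v_1v_2v_3$, let $H_1=v_1v_2ab$ be a hole sharing the edge $v_1v_2$ with $K$, and let $H_2=cdef$ be a hole joined to the rest of $G$ only by the edge $v_1c$; all hypotheses hold with $h=2$, and $v_1$ satisfies (b). Then $G-v_1$ is disconnected, the component $\{c,d,e,f\}$ contains a hole --- so it is not one of the ``chordal/tree-like pieces'' your repair step anticipates --- and (adding a third hole through the edge $v_2v_3$ to make $h=3$) the component containing $K-v_1$ has clique number $3$ but only one hole, so the induction hypothesis $\omega=(\mbox{number of holes})+1$ applies to neither piece. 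Vertex deletion creates a second problem your sketch ignores: $v$ may have arbitrarily many neighbours outside $V(K)\cup V(H)$ (edges lying in no triangle and no hole), and on reattachment each such edge needs its own prey vertex; you give no argument that this fits in the budget of two added vertices. Finally, Lemma~\ref{pasting} cannot be your gluing tool, since it requires the two digraphs to be vertex-disjoint, whereas $v$, $v'$ and the path $H-v$ lie in both pieces you want to combine.

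The paper avoids all of this by a different reduction in case (b): it deletes only the \emph{edges} $vv_j$, $v_j\in V(K)\setminus\{v\}$, keeping $v$ and every other hole (including ones hanging off $v$) inside the resulting graph $G'$, which stays connected, has exactly one fewer hole (Lemma~\ref{hole} rules out newly created holes), and has unique non-edge maximal clique $K-v$; the induction hypothesis then applies verbatim, and the single arc $(v,i_2)$ recovers $G$. For case (a) you concede your reduction ``does not fit the induction directly'' and offer only the hope of placing $v$ at the top of the order; this is exactly the case needing a second idea, which in the paper is: delete a hole edge $uw$ together with all edges from $v$ into $K$, observe that the graph splits into a tree containing $v$ and a component $G_2$ satisfying the induction hypothesis, combine by Lemma~\ref{pasting} (legitimately, since these pieces are disjoint), and add the arcs $(v,i_2)$, $(u,y)$, $(w,y)$. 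Without both cases, the Hall-type feasibility at the heart of your reformulation is never verified, and counting alone does not give it: the order constraints can force two edges onto the same late host (already a single $4$-cycle placed at the end of the order overflows the one spare vertex $i_1$), so the whole content of the theorem is hidden in the matching you never construct.
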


\begin{proof}
We prove by induction on the number of holes. 
Let $G$ be a connected graph with exactly one hole $H$. 
By the assumption that $\omega(G)=h+1(=2)$, 
the graph $G$ is triangle-free. 
Let $e=xy$ be an edge of $G$. 
We show that $k(G-e) \leq 1$. 
First, we consider the case $e \in E(H)$.
Then $G-e$ has no cycle by Lemma~\ref{hole}
since $G$ is triangle-free.
Therefore $G-e$ is a chordal graph and so $k(G-e) \leq 1$.
Second, we suppose that $e$ is a pendant edge of $G$.
Without loss of generality, 
we may assume that $x$ is a pendant vertex.
Then $G - e = G_1 \cup \{x\}$, 
where $G_1$ is a connected triangle-free graph
with exactly one hole.
Thus $|E(G_1)|=|V(G_1)|$ and so
$k(G_1) = |E(G_1)| - |V(G_1)| + 2 = 2$.
This implies that there exists an acyclic digraph $D_1$ 
such that $C(D_1)=G_1 \cup \{x, z_1\}$. 
Since $C(D_1)=(G_1 \cup \{x\}) \cup \{z_1\}= (G-e) \cup \{z_1\}$,
we have $k(G-e) \leq 1$. 
Finally, we deal with the case where $e$ is neither a 
pendant edge nor in $H$.
Then $e$ is a cut edge of $G$ since $e$ is
not in the unique cycle $H$ 
and so $G-e=G_1 \cup G_2$ 
where $G_1$ and $G_2$ are the connected components of $G-e$. 
Since $e$ is not a pendant edge, both $G_1$ and $G_2$ 
have at least two vertices. 
In addition, since the hole $H$ remains in $G-e$, 
either $G_1$ or $G_2$ is a tree. 
Without loss of generality,
we may assume that $H$ is in $G_1$ 
and $G_2$ is a tree. 
Then $k(G_2) \leq 1$, 
and so there exists an acyclic digraph $D_2$ 
such that $C(D_2)=G_2 \cup \{z_1\}$ 
where $i_1$ is an isolated vertex. 
Since we may take $D_2$ as a minimal acyclic digraph, 
$D_2$ contains two vertices $u$ and $v$ 
which have no in-neighbors in $D_2$. 
Since $G_1$ is connected, triangle-free and has exactly one hole,
$k(G_1)=|E(G_1)| - |V(G_1)| + 2 = 2$. 
Then there exists an acyclic 
digraph $D_1$ such that $C(D_1)=G_1 \cup \{z_2, z_3 \}$
where $z_2$ and $z_3$ are isolated vertices.
By Lemma~\ref{pasting}, 
there exists an acyclic digraph $D$
such that 
$C(D) = C(D_1) \cup C(D_2) - \{z_2, z_3\} = (G-e) \cup \{z_1\}$. 
Thus $k(G-e) \leq 1$. 
Hence, in any cases, we have $k(G-e) \leq 1$. 
Let $D'$ be an acyclic digraph such that 
$C(D')=(G-e) \cup \{i_1\}$, 
where $i_1$ is a new vertex. 
We define a digraph $D$ by
$V(D)=V(D') \cup \{i_2\}$ and
$A(D)=A(D') \cup \{(x, i_2), (y, i_2)\}$,
where $i_2$ is a new vertex. 
Then $D$ is acyclic and $C(D)=G \cup \{i_1, i_2\}$. 
Furthermore, both endpoints of $e$ have 
$i_2$ as a common out-neighbor in $D$. 
Hence the basis step holds. 

Now, we assume that, for any connected graph $\hat{G}$ 
with exactly $h$ ($\geq 1$) holes 
such that all the holes in $\hat{G}$ are pairwise edge-disjoint 
and that $\hat{G}$ has at most one non-edge maximal clique 
and that $\omega(\hat{G})=h+1$, 
there exists an acyclic digraph $D$ 
such that $C(D) = \hat{G} \cup \{i_1, i_2\}$ 
and all the vertices of the maximal clique have $i_2$
as a common out-neighbor in $D$. 
Note that if a graph has no non-edge maximal clique 
then the graph must have exactly one hole, 
which is already done in the above argument. 
So it is enough to consider only graphs which have exactly one non-edge 
maximal clique. 
Let $G$ be a connected graph 
with exactly $h+1$ holes 
such that all the holes in $G$ are pairwise edge-disjoint 
and that $G$ has exactly one non-edge maximal clique $K$ 
and that $\omega(G)=h+2$. 
Then $|V(K)|=h+2$. 
We denote the vertices of $K$ by $v_1, v_2, \ldots, v_{h+2}$ 
and the holes of $G$ by $H_1, H_2, \ldots, H_{h+1}$. 
By Lemma \ref{cond}, 
$K$ contains a vertex $v_i$ satisfying the condition (a) or (b). 
With out loss of generality, we may assume $v_i=v_1$. 

First, suppose that $v_1$ satisfies the condition (a). 
By Lemma \ref{hole}, 
$G-e$ has at most $h$ pairwise edge-disjoint holes 
for an edge $e=uw \in E(H_i) \setminus E(K)$. 
Consider the graph 
$G':= (G - e) - \{v_1v_j \mid v_j \in V(K) \setminus\{v_1\} \}$. 
Since $v_1$ satisfies (a), 
$v_1$ must belong to a component not 
containing holes or $u$ or $w$ in $G'$ 
and $G'$ has exactly two connected components. 
Let $G_1$ be the component containing  $v_1$ and 
$G_2$ be the other component of $G'$. 
Since $G_1$ is a tree and the competition number of a tree is equal to $1$, 
there exists an acyclic digraph $D_1$ 
such that $C(D_1)=G_1 \cup \{i_1\}$, where 
$i_1$ is a new isolated vertex, 
and that $D_1$ has at least two vertices, say $x$ and $y$, 
which have no in-neighbor in $D_1$. 
Since $G_2$ has a unique maximal clique, whose size is $h+1$, 
and exactly $h$ edge-disjoint holes, 
by the induction hypothesis, 
there exists an acyclic digraph $D_2$ 
such that $C(D_2) = G_2 \cup \{i_2,i_3\}$ 
where $i_2$ and $i_3$ are isolated vertices 
and all the vertices of $K - v_1$ 
have $i_2$ as a common out-neighbor in $D_2$. 
By Lemma~\ref{pasting},
there exists an acyclic digraph $D^*$ 
such that 
$C(D^*) = C(D_1) \cup C(D_2) - \{i_3\} 
= G_1 \cup G_2 \cup \{i_1, i_2\}$. 
Moreover, all the vertices of $K - v_1$ has a common 
out-neighbor $i_2$ in $D^*$. 
Now we add arcs 
$(v_1,i_2)$, $(u,y)$, $(w,y)$ to $D^*$
to obtain a digraph $D$. 
It can easily be checked that $D$ is acyclic and 
$C(D)=G \cup \{i_1,i_2\}$, 
and that all the vertices in $K$ have a common 
out-neighbor $i_2$. 

Second, we suppose that $v_1$ satisfies the condition (b). 
Then $v_1$ is incident to an edge $e$ shared by $K$ and a hole $H_j$, 
and $v_1$ is not a vertex on any other hole. 
Without loss of generality, we may assume $H_j=H_1$. 
Then 
$G':=G - \{ v_1v_j \mid v_j \in V(K) \setminus \{v_1\} \}$ 
has a unique maximal clique $K - v_1$. 
By Lemma~\ref{hole}, 
$G'$ has at most $h$ holes, which are pairwise edge-disjoint, 
since we removed all the edges incident to $v_1$ in $K$. 
By the induction hypothesis, 
there exists an acyclic digraph $D'$ 
such that $C(D')=G' \cup \{i_1,i_2\}$ 
where $i_1$ and $i_2$ are isolated vertices added 
and all the vertices of $K - v_1$ 
have a common out-neighbor $i_2$ in $D'$. 
Now, we define a digraph $D$ by 
$V(D)=V(G) \cup \{i_1,i_2\}$ and $A(D)=A(D') \cup \{(v_1,i_2)\}$.
Then it can easily be checked that $D$ is acyclic 
and $C(D)=G \cup \{i_1,i_2\}$ 
and that all the vertices in $K$ have a common out-neighbor $i_2$. 
Hence the theorem holds. 
\end{proof}

Theorem~\ref{main1} can be generalized as follows:

\begin{theorem}\label{main2}
Let $G$ be a connected graph. 
Suppose that all the holes in $G$ are pairwise edge-disjoint
and that $G$ has at most one non-edge maximal clique. 
If the clique number $\omega(G)$ of $G$ satisfies
$2 \leq \omega(G) \leq h(G)+1$ where 
$h(G)$ denotes the number of holes in $G$, 
then 
\begin{equation}\label{eq:ineq}
k(G) \leq h(G)-\omega(G) +3. \tag*{($\star$)}
\end{equation}
\end{theorem}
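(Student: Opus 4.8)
The plan is to prove the inequality by induction on the quantity $h(G)-\omega(G)$, which under the hypotheses satisfies $h(G)-\omega(G)\ge -1$. The base case $h(G)-\omega(G)=-1$ is exactly $\omega(G)=h(G)+1$, i.e.\ precisely the hypothesis of Theorem~\ref{main1}; there $k(G)\le 2 = h(G)-\omega(G)+3$, so the claimed bound holds. For the inductive step I would assume $\omega(G)\le h(G)$ and reduce to a graph with one fewer hole but the same clique number.

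The reduction is a single edge deletion. Writing $\omega=\omega(G)$, $h=h(G)$, and letting $K$ be the non-edge maximal clique (present since $\omega\ge 3$ in this step), I first choose a hole $H$ together with an edge $e\in E(H)\setminus E(K)$. Such an edge exists because Lemma~\ref{chord2} gives $|V(K)\cap V(H)|\le 2$, so $H$ and $K$ share at most one edge while $|E(H)|\ge 4$. Since $e$ lies on the cycle $H$ it is not a bridge, so $G-e$ is connected. As $K$ is the only non-edge maximal clique and $e\notin E(K)$, the edge $e$ lies in no triangle; hence $\omega(G-e)=\omega$ and $K$ remains the unique non-edge maximal clique of $G-e$. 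Finally, the $h-1$ holes of $G$ other than $H$ survive in $G-e$ (they avoid $e$ by edge-disjointness and stay chordless), while Lemma~\ref{hole} forbids any new hole from appearing once $e\notin E(K)$; thus $h(G-e)=h-1$, and these holes are still pairwise edge-disjoint. So $G-e$ inherits every hypothesis with $h(G-e)-\omega(G-e)=(h-1)-\omega$, one smaller than for $G$, and still $2\le\omega\le h(G-e)+1$.

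By the induction hypothesis, $k(G-e)\le (h-1)-\omega+3=h-\omega+2$. It then remains to invoke the general fact $k(G)\le k(G-e)+1$: given an acyclic digraph $D'$ with $C(D')=(G-e)\cup I_{k(G-e)}$, I adjoin one new vertex $w$ and the arcs $(x,w),(y,w)$, where $e=xy$. The resulting digraph $D$ is acyclic, $w$ is isolated in the competition graph, and the only new competition edge is $xy$, so $C(D)=G\cup I_{k(G-e)+1}$. Combining the two estimates yields $k(G)\le h-\omega+3$, as desired.

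The step I expect to demand the most care is confirming that $G-e$ genuinely satisfies \emph{all} hypotheses, so that the induction hypothesis applies—most delicately that no new hole is created, which is exactly what Lemma~\ref{hole} secures once $e\notin E(K)$. A second point needing separate attention is the degenerate case $\omega=2$, in which $G$ is triangle-free and no clique $K$ exists: here any $e\in E(H)$ automatically lies in no triangle, and one checks directly (using Lemma~\ref{chord} together with edge-disjointness, so that every cycle of length $\ge 4$ is a hole) that $h(G-e)=h-1$; alternatively, in this case the bound collapses to Roberts' equality $k(G)=|E(G)|-|V(G)|+2=h+1=h-\omega+3$, which could be cited instead of running the induction.
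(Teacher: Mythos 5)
Your proof is correct, and its engine is identical to the paper's: choose a hole $H$ and an edge $e\in E(H)\setminus E(K)$ (which exists by Lemma~\ref{chord2}), observe that $G-e$ is connected, keeps the same clique number and unique non-edge maximal clique, and has exactly one fewer hole (the surviving holes stay chordless, and Lemma~\ref{hole} forbids new ones precisely because $e\notin E(K)$), apply the induction hypothesis, and then recover $G$ from $G-e$ at the cost of one extra isolated vertex via the standard construction $k(G)\le k(G-e)+1$. Where you genuinely diverge is in the induction bookkeeping and the anchoring results. The paper inducts on $h(G)$, so its base case $h=1$ forces $\omega=2$ and is settled by citing \cite{CK05}, and its inductive step needs two boundary cases: $\omega=2$, settled by citing \cite{KLS10}, and $\omega=h(G)+1$, settled by Theorem~\ref{main1}. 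You instead induct on $h(G)-\omega(G)$, so your only base case is $\omega(G)=h(G)+1$, which is exactly Theorem~\ref{main1}; the triangle-free case $\omega=2$ you handle either by running the same reduction (legitimate: triangle-freeness together with Lemma~\ref{chord} and edge-disjointness shows every cycle of length at least $4$ is a hole, so indeed $h(G-e)=h-1$) or by Roberts' equality $k(G)=|E(G)|-|V(G)|+2$ combined with the edge count $|E(G)|=|V(G)|+h-1$, which is the same cycle-space fact the paper asserts in its introduction. The payoff of your organization is a more self-contained argument: it needs only Theorem~\ref{main1}, the paper's lemmas, and Roberts' classical theorem, dispensing entirely with the citations to \cite{CK05} and \cite{KLS10}; the cost is that you must justify the $\omega=2$ case yourself rather than quoting it, which you do adequately.
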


\begin{proof}
We prove by induction on the number $h(G)$ of holes in a graph $G$. 
Consider when 
$h(G)=1$. 
By $2 \leq \omega(G) \leq h(G)+1$, we have $\omega(G)=2$. 
It was shown in \cite[Theorem 11]{CK05} that 
$k(G) \leq 2$ holds for any graph $G$ with $h(G)=1$. 
Therefore $k(G) \leq 2 = h(G) - \omega(G) +3$, 
and thus the basis step holds. 
Now, we assume that the inequality \ref{eq:ineq} holds for 
any connected graph $\hat{G}$ with $h(\hat{G})=h$ ($h \ge 1$) 
such that all the holes in $\hat{G}$ are pairwise edge-disjoint
and that $\hat{G}$ has at most one non-edge maximal clique 
and that $2 \leq \omega(\hat{G}) \leq h(\hat{G})+1$.
Let $G$ be a connected graph with $h(G)=h+1$ 
such that all the holes in $G$ are pairwise edge-disjoint 
and that $G$ has at most one non-edge maximal clique 
and that $2 \leq \omega(G) \leq h(G)+1(=h+2)$. 
If $\omega(G) = 2$, then $h(G) -\omega(G) +3 = h+2$. 
It was shown in \cite[Theorem 1.5]{KLS10} 
that $k(G) \leq h(G)+1$ holds for any graph $G$ such that 
all the holes in $G$ are pairwise edge-disjoint. 
Therefore the inequality \ref{eq:ineq} holds. 
If $\omega(G) = h+2$, then $h(G) - \omega(G) + 3 = 2$. 
By Theorem~\ref{main1}, $k(G) \le 2$ 
and the inequality \ref{eq:ineq} holds. 
Thus we assume that $3 \leq \omega:=\omega(G) \leq h+1$. 
Let $e=xy$ be an edge on some hole $H$ 
but not on the non-edge maximal clique $K$ in $G$. 
Then $G-e$ is a connected graph with $h(G-e)=h$ 
and $\omega(G-e) = \omega$ 
such that all the holes of $G-e$ are pairwise edge-disjoint 
and that $G$ has at most one non-edge maximal clique. 
Since $\omega < h+2$, we have $\omega(G-e) \leq h(G-e)+1$. 
By the induction hypothesis, 
$k(G-e) \leq h(G-e)-\omega(G-e) +3= h - \omega+3$. 
Then there exists an acyclic digraph $D'$ 
such that $C(D')=(G-e) \cup I_{h - \omega + 3}$. 
Now we let $D$ be the digraph obtained 
by adding a new vertex $i$ and two new arcs $(x,i)$ and $(y,i)$ to $D'$. 
Then the digraph $D$ is acyclic and 
$C(D)=G \cup I_{h - \omega + 3} \cup \{i\}$. 
Thus $k(G) \leq (h+1) - \omega+3$. 
Hence the theorem holds. 
\end{proof}

{\small
}

\end{document}